\documentclass[a4paper,11pt]{amsart}
\usepackage{graphicx,amssymb}

\voffset-16mm\hoffset-10mm\textheight220mm\textwidth144mm

\newtheorem{thm}{Theorem}[section]

\newtheorem{lem}[thm]{Lemma}
\newtheorem{prop}[thm]{Proposition}
\newtheorem*{thm A}{Theorem A}
\newtheorem*{thm B}{Theorem B}
\theoremstyle{definition}

\newtheorem{question}[thm]{Question}

\theoremstyle{remark}
\newtheorem{rem}[thm]{Remark}
\numberwithin{equation}{section}

\newcounter{numl}

\newcommand{\labelnuml}{\textup{(\roman{numl})}}
\newenvironment{numlist}{\begin{list}{\labelnuml}%
{\usecounter{numl}\setlength{\leftmargin}{0pt}%
\setlength{\itemindent}{2\parindent}%
\setlength{\itemsep}{\smallskipamount}\def
\makelabel ##1{\hss \llap {\upshape ##1}}}}{\end{list}}

\newcommand{\del}{\partial}
\def\cal{\mathcal}
\def\bb{\mathbb} 
\def\frak{\mathfrak }
\def\a{\alpha } 

\def\g{\gamma }

\def\g{\gamma }

\def\o{\omega }

\def\s{\sigma }

\def\t{\theta }

\def\ot{\otimes }
\def\part{\partial }
\def\bpart{\bar\partial }

\def\w{\wedge }

\begin{document}

\title[From lcK to bi-Hermitian] {From locally conformally K\"ahler to bi-Hermitian structures on non-K\"ahler complex surfaces}

\author[V. Apostolov]{Vestislav Apostolov} \address{Vestislav Apostolov \\
D{\'e}partement de Math{\'e}matiques\\ UQAM\\ C.P. 8888 \\ Succ. Centre-ville
\\ Montr{\'e}al (Qu{\'e}bec) \\ H3C 3P8 \\ Canada}
\email{apostolov.vestislav@uqam.ca}

\author[M. Bailey] {Michael Bailey}\address{Michael Bailey \\
D{\'e}partement de Math{\'e}matiques\\ UQAM\\ C.P. 8888 \\ Succ. Centre-ville
\\ Montr{\'e}al (Qu{\'e}bec) \\ H3C 3P8 \\ Canada}
\email{bailey@cirget.ca}

\author[G. Dloussky]{Georges Dloussky}\address{Georges Dloussky \\ Centre de Math\'ematiques et Informatique\\ 
LATP, UMR 6632 \\
Universit\'e d'Aix-Marseille \\
Technop\^ole Ch\^ateau-Gombert \\
39, rue F. Joliot Curie \\
13453 Marseille Cedex 13 \\ France}
\email{georges.dloussky@univ-amu.fr}

\thanks{V.A. was supported in part by an NSERC discovery grant and is grateful to
the Simons Center for Geometry and Physics  and the Institute of Mathematics and Informatics of the Bulgarian Academy of
Sciences where a part of this project was realized. V.A. and G. D. were supported in part by the ANR grant  MNGNK-ANR-10-BLAN-0118. The authors are grateful to M. Gualtieri and R. Goto for their interest and valuable suggestions as well as the anonymous referees for the  careful reading of the initial manuscript and  constructive suggestions which greatly improved the presentation.}

\date{\today}

\begin{abstract} We prove that locally conformally K\"ahler metrics on certain compact complex surfaces with odd first Betti number can be deformed to new examples of bi-Hermitian metrics.
\end{abstract}

\maketitle


\section{Introduction}

A \emph{bi-Hermitian} structure on a complex manifold $S=(M,J)$ consists of a pair $(J_+=J,J_-)$ of integrable complex structures, inducing the same orientation, each of which is orthogonal with respect to a common Riemannian metric $g$.  We are generally only interested in the conformal class $c=[g]$.  Furthermore, the case when $J_+ \equiv J_-$ or $J_+ \equiv - J_-$ is considered trivial, so we shall also assume that $J_+(x) \neq \pm J_-(x)$ for at least one point $x\in M$.

Bi-Hermitian geometry has attracted a great deal of interest recently through its link with \emph{generalized K\"ahler geometry},  a natural  extension of K\"ahler geometry  first  studied by Gualtieri~\cite{gualtieri-GK} in the context of generalized complex structures introduced by N. J. Hitchin~\cite{hitchin0}. It is shown in \cite{gualtieri-GK} that a generalized K\"ahler structure is equivalent to the data of a bi-Hermitian structure $(g,J_+, J_-)$, satisfying the relations 
\begin{equation}\label{relations}
d^c_+ F_+ =- d^c_-F_- = dB, 
\end{equation}
for some 2-form $B$, where $F_{\pm}(\cdot, \cdot)=g(J_{\pm}\cdot, \cdot)$ are the corresponding fundamental $2$-forms of the Hermitian structures $(g, J_{\pm})$, and $d^c_{\pm}=i(\bar\partial_{\pm}-\partial_{\pm})$ are the associated complex operators. We may (trivially) represent a K\"ahler structure $(J, \omega)$ by taking $J_{\pm}= \pm J,  F_{\pm}= \pm \omega$ while recent work of Goto~\cite{Goto-JDG} provides a way to deform K\"ahler metrics to non-trivial generalized K\"ahler structures, i.e. bi-Hermitian structures satisfying \eqref{relations},  and for which $J_+ \neq - J_-$ at at least one point of $M$.

\vspace{0.2cm}
This work is a part of the larger problem of the existence of (conformal classes of) bi-Hermitian structures on compact complex surfaces. In this case, to each bi-Hermitian structure $(c, J_+ , J_-)$ on $S=(M,J=J_+)$, one can associate (using the commutator $[J_+, J_-]= J_+J_- - J_+ J_-$ and a reference metric $g \in c$) a non-trivial holomorphic section $\sigma= [J_+, J_-]^{\sharp} \in H^0(S, {\cal K}^*_S\otimes {\mathcal L})$ of the anti-canonical bundle ${\cal K}^*_S$ of $S$,  twisted with a topologically trivial flat holomorphic line bundle ${\mathcal L}$, see~\cite[Lemma~3]{AGG}. Furthermore, bi-Hermitian structures  $(c, J_+, J_-)$ on compact real $4$-dimensional (connected) manifolds $M$ can be divided into three different classes as follows:
\begin{enumerate}
\item[(i)] Everywhere on $M$, $J_+ \neq J_-$ and $J_+ \neq -J_-$;
\item[(ii)] Everywhere on $M$, $J_+ \neq J_-$ (resp. $J_+ \neq -J_-$), but for at least one $x \in M$, $J_+(x) = -J_-(x)$ (resp. $J_+(x) = J_-(x)$, though---by replacing $J_-$ with $-J_-$ if necessary---we can assume without loss of generality that in this class $J_+$ and $J_-$ never agree but $J_+$ and $-J_-$ sometimes do);
\item[(iii)] There are points on $M$ where $J_+ = J_-$ and also points where $J_+ = - J_-$.
\end{enumerate}

Recall~\cite{siu, todorov, buchdahl, lamari} that on a compact complex surface $S=(M,J)$ a K\"ahler metric exists if and only if the first Betti number is even. Similarly, by  \cite[Cor.~1 and Prop.~4]{AG}, a bi-Hermitian conformal structure $(c,J_+, J_-)$ corresponds to a generalized K\"ahler structure for some $g\in c$ if and only if $b_1(M)$ is even. Furthermore, in this case the flat holomorphic line bundle ${\mathcal L}$ mentioned above is trivial (\cite[Lemma 4]{AGG}) and the bi-Hermitian structures are either of type (i) or (ii) (\cite[Prop.~4]{AGG}).  The first case corresponds to K\"ahler surfaces with trivial canonical bundle (see \cite{AGG}), i.e. tori and $K3$ surfaces.  The classification in the second case follows by  \cite{AGG,BM} and a recent result in \cite{Goto-BH}: $S$ must be then a K\"ahler surface of negative Kodaira dimension whose anti-canonical bundle ${\cal K}^*_S$ has a non-trivial section and any K\"ahler metric on  $S=(M,J_+)$ can be deformed to a non-trivial bi-Hermitian structure $(c,J_+, J_-)$ of the class (ii).

In the case when $S$ doesn't admit K\"ahler metrics (i.e. the first Betti number of $S$ is odd),  the complex surfaces supporting bi-Hermitian structures in the class (i)  are classified  in \cite{AD}.

Finally, another case for which the existence theory is fairly complete by~\cite{FP,CG} consists of the bi-Hermitian complex surfaces arising from {\it twisted generalized K\"ahler} structures, i.e., those for which relation \eqref{relations} is weakened to $d^c_+ F_+ =- d^c_-F_-= H$ for some closed $3$-form $H$: when the de Rham class $[H] \in H^3_{dR}(M)$ is trivial,  we recover the generalized K\"ahler case discussed above, while when $[H]\neq 0$ one gets bi-Hermitian structures with ${\mathcal L} \cong \mathcal O$  of the class (iii)  (\cite[Prop.~4]{AGG})  on compact complex surfaces in the Kodaira class VII (\cite[Thm.~1]{A}).

\vspace{0.2cm}

Thus motivated, in this note we narrow our focus to the existence of compatible bi-Hermitian structures of the class (ii) on compact complex surfaces $S=(M,J)$ with odd first Betti number. It is shown in \cite{A} that $S$ then must be a complex surface in the Kodaira class VII (i.e. $S$ has Kodaira dimension $-\infty$ and $b_1(S)=1$) while \cite{Dl} provides a complete list of possibilities for the minimal model of $S$.  A more exhaustive taxonomy of bi-Hermitian complex surfaces with odd first Betti number is provided in the appendix A.

\vspace{0.2cm}  One may regard a general bi-Hermitian structure $(c, J_+, J_-)$ on a compact $4$-manifold $M$ as relaxing the generalized K\"ahler compatibility relation \eqref{relations}.  Even when $b_1(M)$ is odd, a choice of metric in $c$ satisfying \eqref{relations} exists \emph{locally} (see~ \cite[Lemma~1]{AGG} and \cite[Prop.~6]{CG}); thus, compatible bi-Hermitian conformal classes on $S$ are always locally conformal to generalized K\"ahler structures.  It turns out that under the assumption (ii), one can further relate the bi-Hermitian structures to locally conformally K\"ahler metrics, in a similar way that non-trivial generalized K\"ahler structures arise as deformations of genuine K\"ahler ones \cite{Goto-BH, Goto-JDG}.  This is the context for our main result.

\vspace{0.2cm} Recall that a {\it locally conformally K\"ahler} (or \emph{lcK}) metric on a complex manifold $S=(M,J)$ may be defined by a positive-definite $(1,1)$-form $F$  satisfying 
$dF= \theta \wedge F$ for a closed $1$-form $\theta$. The $1$-form $\theta$ is uniquely determined and is referred to as {\it the Lee form} of $F$. The corresponding Hermitian metric $g(\cdot, \cdot)= F(\cdot, J \cdot)$ defines a conformal class  $c$ on $M$. Changing the Hermitian metric $\tilde g = e^f g$ within $c$ amounts to transforming the Lee form by $\tilde \theta = \theta + df$, showing that the de Rham class $[\theta]$ is an invariant of the conformal class $c$.   The study of lcK metrics, which goes back to foundational works by F. Tricerri and I. Vaisman, is a natural extension of the theory of K\"ahler metrics to certain classes of non-K\"ahlerian complex manifolds, see e.g.~\cite{DO, OV} for an overview of the theory.  Of particular interest is the case of compact complex surfaces, where recent works~\cite{B,B1,B2,GO} showed that lcK metric exists for all known (and conjecturally for all) compact complex surfaces with odd first Betti number,  with the one exception of certain Inoue surfaces with zero second Betti number described in \cite{B}.  

Let  $S=(M,J)$ be a compact complex surfaces in the class VII.   By the well-known isomorphism (see e.g. \cite{bpv})
\begin{equation}\label{H1}
H^1_{dR}(S,\bb C)\simeq H^1(S,\cal O)\stackrel{\exp }{\longrightarrow}Pic^0(S)\simeq H^1(S,\bb C^\star)\simeq \bb C^\star
\end{equation}
for any de Rham class $a\in H^1_{dR}(S,\bb C)$ there exists a unique flat holomorphic line bundle  ${\mathcal L}_{a}$ over $S$. In the case where $a$ is real, i.e., where it belongs to $H^1_{dR}(S,\bb R)$, ${\mathcal L}_{a}$ is the complexification of a real  flat bundle $L_{a}$ over $S$, and in the sequel we will tacitly identify ${\mathcal L}_{a}$ with $L_{a}$, referring to such flat holomorphic bundles as being of real type.  Then we can make the following conjecture:

\vspace{0.2cm}
\noindent
{\bf Conjecture.} \ Let $S=(M,J)$ be a compact complex surface in the class VII such that $H^0(S,{\cal K}_S^{*}\ot {\mathcal L})\neq 0$ for a  flat holomorphic bundle of real type ${\mathcal L}$ with $H^0(S, {\mathcal L^{\ell}})=0$ for all $\ell \ge 1$. Then the following two conditions are equivalent:
\begin{enumerate}
\item[$\bullet$] There exists a bi-Hermitian structure $(g,J_+,J_-)$ of the class (ii) on $(M,J)$,  such that $J=J_+$ and  $\sigma= [J_+, J_-]^{\sharp} \in H^0(S, {\cal K}^*_S\otimes \mathcal L)$.
\item[$\bullet$]  There exists a lcK metric  with Lee form $-\t$ whose de Rham class in $H^1_{dR}(S,{\bb C})$ corresponds to the flat bundle ${\mathcal L}^{*}$.
\end{enumerate}
The assumptions are justified by the fact that, by \cite[Proposition~4]{AGG}, \cite[Theorem 1]{A}, and the degree computation of \cite[p.~561]{A},  the two cohomological conditions in the above Conjecture are  necessary for the existence of a bi-Hermitian metric satisfying (ii), while $H^0(S, {\mathcal L^{\ell}})=0$ is necessary for the existence of a lcK metric with Lee form corresponding to $L^*$.

\vspace{0.2cm}
We will establish one direction of the conjectured correspondence by extending, from the K\"ahler case to the strictly lcK case, certain deformation arguments due to R.~Goto~\cite{Goto-BH}, N.~J.~Hitchin~\cite{Hitchin-deform}  and M.~Gualtieri~\cite{Gual}.

\begin{thm}\label{main}  Let  $S=(M,J)$ be a compact complex surface in the class VII such  that $H^0(S,{\cal K}_S^{*}\ot {\mathcal L})\neq 0$ for a flat holomorphic line bundle of real type ${\mathcal L}$. Let $a\in H^1_{dR}(S, \bb R)$ be the real de Rham class corresponding to ${\mathcal L}$ and suppose that $S$ admits a lcK metric $g$ with Lee form in $a$. Then $S$ also admits a bi-Hermitian conformal structure $(c, J_+, J_-)$ with $J_+=J$ and $\sigma = [J_+, J_-]^{\sharp} \in H^0(S,{\cal K}_S^{*}\ot {\mathcal L})$.
\end{thm}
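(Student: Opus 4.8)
The plan is to keep $J_+=J$ fixed and to produce the second complex structure $J_-$ as a deformation of $J$ driven by the anti-canonical section $\sigma$, adapting the holomorphic-Poisson deformation of Goto, Hitchin and Gualtieri to the conformal setting imposed by the lcK metric. The first point to exploit is that $S$ is a complex \emph{surface}, so $\mathcal{K}_S^*=\wedge^2T^{1,0}S$ and $\sigma\in H^0(S,\mathcal{K}_S^*\otimes\mathcal L)$ is nothing but an $\mathcal L$-twisted holomorphic bivector of type $(2,0)$; the Schouten bracket $[\sigma,\sigma]$ lands in $\wedge^3T^{1,0}S=0$, so $\sigma$ is automatically a (twisted) holomorphic Poisson structure. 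The data $\sigma$ and the lcK form $F$ are to be combined, via the metric $g$, into a real endomorphism field generating the deformation; the essential remark is that the Lee form $\theta\in a$ equips $\mathcal L$ (whose de Rham class is $a$) with a flat connection, and it is precisely this connection that will cancel the $\mathcal L$-twist carried by $\sigma$. Since $b_1(S)$ is odd we do not expect to recover the full generalized K\"ahler relations \eqref{relations}, only a bi-Hermitian conformal structure, which is what the theorem asserts.

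Concretely, I would first pass to the covering $\pi\colon\hat S\to S$ on which $\hat\theta=\pi^*\theta$ becomes exact, $\hat\theta=d\varphi$; its deck group is infinite cyclic since $a\neq 0$ (a strictly lcK, non-K\"ahler surface). On $\hat S$ the $(1,1)$-form $\hat\omega:=e^{-\varphi}\,\pi^*F$ is closed, because $d(e^{-\varphi}\pi^*F)=e^{-\varphi}(-d\varphi\wedge\pi^*F+\pi^*dF)=0$ by the lcK equation $dF=\theta\wedge F$, so $(\hat S,\hat J,\hat\omega)$ is genuinely K\"ahler; simultaneously the Lee connection trivializes $\pi^*\mathcal L$ and turns $\pi^*\sigma$ into an honest holomorphic Poisson bivector $\hat\beta$. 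On this K\"ahler cover the Goto--Hitchin--Gualtieri machinery applies verbatim: from $\hat\omega$ and $\hat\beta$ one builds an endomorphism field and a family $\hat J_-^{\,t}$ of integrable complex structures, orthogonal to deformed K\"ahler metrics, with $\hat J_-^{\,0}=\hat J$ and commutator $[\hat J,\hat J_-^{\,t}]^\sharp=t\,\hat\beta+O(t^2)$; integrability is the unobstructedness of deformation by a holomorphic Poisson structure.

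The substance of the theorem, and where I expect the principal difficulty, is the \emph{descent} to $S$. The auxiliary K\"ahler metric $\hat\omega=e^{-\varphi}\pi^*F$ is not deck-invariant: under a generator of the deck group $\varphi$ shifts by the period $c_0$ of $\theta$, so $\hat\omega$ is rescaled by the constant $\lambda=e^{-c_0}$, and since the generating endomorphism scales linearly with the K\"ahler form the family transforms as $\hat J_-^{\,t}\mapsto\hat J_-^{\,\lambda t}$. Hence no single slice $\hat J_-^{\,t}$ descends, and the naive deformation is intrinsically $\mathcal L$-valued rather than defined on $S$. The remedy -- and the heart of the passage from the K\"ahler to the strictly lcK case -- is to take the deformation parameter not to be a scalar but a nowhere-zero section $\tau$ of $\mathcal L$ parallel for the Lee connection (equivalently $\tau=e^{-\varphi}$ on the cover), chosen so that the weights cancel and $\tau\,\sigma$ together with $F$ assemble into genuine, deck-invariant real data on $S$. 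The price is that one must re-prove integrability of the resulting $J_-$ with this position-dependent parameter: differentiating $\tau$ produces Lee-form terms $d\tau=-\tau\,\theta$ which have to cancel exactly against the non-closed part $dF=\theta\wedge F$ of the lcK form. Verifying that the twist of $\sigma$ is compensated by the Lee form so that the $\mathcal L$-valued deformation descends to an honest integrable $J_-$ on $S$ is, I expect, the main technical obstacle.

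Finally I would identify the class and read off the conclusion. By construction $J_-^{\,t}=J_+$ along the zero locus of $\sigma$ (where the deforming endomorphism vanishes), while off that locus, for the relevant small nonzero value of the parameter, $J_+\neq\pm J_-^{\,t}$; since $\sigma=[J_+,J_-^{\,t}]^\sharp$ vanishes exactly where $J_+$ and $J_-^{\,t}$ commute, the divisor of $\sigma$ is precisely the coincidence set. Up to replacing $J_-$ by $-J_-$ this places the structure in class (ii), and a rescaling of the parameter recovers the prescribed $\sigma\in H^0(S,\mathcal{K}_S^*\otimes\mathcal L)$, yielding the desired bi-Hermitian conformal structure $(c,J_+,J_-)$ on $S=(M,J)$.
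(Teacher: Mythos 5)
Your overall strategy --- deforming $J$ by means of the $\mathcal{L}$-twisted holomorphic Poisson bivector $\sigma$ together with the lcK form, and the recognition that the deformation data is intrinsically $L^*$-valued --- matches the paper's starting point, but both pillars of your execution fail, and the step you defer as ``the main technical obstacle'' is precisely the content of the paper's proof. First, the K\"ahler cover $\hat S$ on which $\theta$ becomes exact is necessarily infinite-sheeted (since $a\neq 0$), hence \emph{non-compact}, so the Goto--Hitchin--Gualtieri machinery does not apply ``verbatim'' there: that machinery produces the deformation as a power series whose coefficients are obtained by solving $\bar\partial$-equations (via a vanishing of $H^{0,2}$ and a Green operator) and whose convergence rests on Schauder estimates, all of which presuppose compactness. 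Second, your descent device does not exist: a nowhere-vanishing section of the flat bundle $\mathcal{L}$ that is parallel for its flat connection is exactly a flat trivialization, which forces the holonomy to be trivial, i.e.\ $a=0$ and $\mathcal{L}\cong\mathcal{O}$ --- excluded here because the metric is strictly lcK on a surface with odd $b_1$. (Your own formula $d\tau=-\tau\,\theta\neq 0$ contradicts parallelism; with this non-parallel $\tau$ you are back to having to cancel the Lee terms by hand, which is exactly what you postpone.)

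The paper closes this gap without ever leaving the compact surface $S$. It proves a twisted version of Gualtieri's theorem (Proposition~\ref{marco}): if $\omega$ is a $d_{L^*}$-closed $L^*$-valued $2$-form whose $J$-invariant part is positive definite and which satisfies $\omega J - J^*\omega + \omega Q\omega = 0$ with $Q={\rm Re}(\sigma)$ being $L$-valued, then $Q\omega$ is an honest endomorphism of $TM$ and $J_-:=-J-Q\omega$ is integrable; integrability is a local statement, verified on charts where $\theta=df_i$ by the untwisted theorem, so no global ``re-proof of integrability with a position-dependent parameter'' is required --- the twist bookkeeping does that automatically. The real work is then to solve this quadratic equation together with $d_{L^*}\omega(t)=0$ and $\dot\omega(0)=F$ by a power series $\omega(t)=tF+t^2\omega_2+\cdots$ on $S$ itself: the $(2,0)+(0,2)$ parts of the coefficients $\omega_n$ are forced inductively, their $(1,1)$ parts are chosen so that $\omega_n$ remains $d_{L^*}$-closed, and this choice is possible precisely because $H^{0,2}(S,\mathcal{L}^*)=0$ (Proposition~\ref{vanish}, resting on $H^0(S,\mathcal{L}^{\otimes 2})=0$, which follows from ${\rm deg}_g(\mathcal{L})<0$ for the Gauduchon metric of the lcK conformal class --- a cohomological input entirely absent from your proposal); the $(1,1)$ part of the quadratic equation then vanishes identically by an algebraic induction using $JQ=-QJ^*$, and convergence and smoothness follow from Schauder estimates and elliptic regularity, again using compactness of $S$. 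Since your argument neither substitutes for nor reaches any of these steps, the theorem remains unproved in your proposal.
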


We use Theorem~\ref{main} to give new examples of bi-Hermitian metrics in the class (ii) on certain Hopf surfaces.

\section{Preliminaries}
For a closed $1$-form $\theta$ on $M$, we denote by $L=L_{\theta}$ the flat real line bundle determined by the class $[\theta] \in H^1_{dR}(M)$ and by $L^*$ its dual. The differential operator  $d_\t=d- \t\wedge$ then defines the Novikov complex
$$\cdots \stackrel{d_\t}{\to}\Omega^{k-1}(M) \stackrel{d_\t}{\to} \Omega^{k}(M)  \stackrel{d_\t}{\to}\cdots$$
and the corresponding cohomology groups $H^{k}_{\theta}(M)$. Let $\frak U=(U_i)$ be an open covering such that $\t_{\mid U_i}=df_i$.  Then, $\frak U$ defines a trivialization for $L$ with (constant) transition functions $e^{f_i-f_j}$ on $U_{ij}=U_i\cap U_j$; furthermore,  $(U_{i}, e^{-f_i})$ defines an isomorphism, denoted by  $e^{-f}$,  between the Novikov complex and $L^*$-valued de Rham complex
 $$\cdots \stackrel{d_{L^*}}{\to}\Omega^{k-1}(M,L^*) \stackrel{d_{L^*}}{\to} \Omega^{k}(M, L^*)  \stackrel{d_{L^*}}{\to}\cdots$$
 which acts at degree $k$ by $e^{-f}(\a)=(e^{-f_i}\a_{\mid U_i})$ for any $\a \in \Omega^k(M)$, and  thus
$$d_\t=e^{f_i} {d_{L^*}}_{\mid U_i} e^{-f_i}.$$
In particular,  we have an isomorphism between the cohomology groups
$$H^{k}_\t(M)\simeq H^k(M,L^*).$$

Considering the Dolbeault cohomology groups of $S$ with values in the flat holomorphic line bundle ${\mathcal L}^*= {L^*}\otimes {\bb C}$, we have 
$$d_{L^*}=\part _{\mathcal {L^*}}+\bpart_{\mathcal {L}^*}, \quad {\rm and}\quad d_\t=\part_\t+\bpart_\t$$
 with 
 $$\part_\t=\part-\t^{1,0}\w\quad {\rm and}\quad \bpart_\t=\bpart-\t^{0,1}\w,$$
giving rise to the isomorphisms 
$$H^{p,q}_{\bar \partial_\t}(S)\simeq H^{p,q}(S,\cal L^*).$$

Similarly, the space of holomorphic  sections $H^0(S, {\cal K}_S^{*}\otimes {\cal L})$ can be naturally identified with the space of smooth sections of $\bigwedge^2 (T^{1,0}M)$ in the kernel of the twisted Cauchy--Riemann operator 
\begin{equation*}
\bar \partial_{\theta}  \sigma = \bar \partial \sigma + \theta^{0,1}\otimes \sigma.
\end{equation*}

\vspace{0.2cm}
We shall use the following vanishing result.
\begin{prop}\label{vanish} Let $S$ be a compact complex surface in the class VII and ${\cal L}$ a flat holomorphic line bundle over $S$, such that $H^0(S, {\cal K}^*_S\otimes \cal L)\neq 0$ and $H^0(S, \cal L^{\otimes 2})=0$. Then  $H^{0,2}(S, \cal L^*)=0$. In particular,  for any $(0,2)$-form with values in ${\cal L^*}$, $\alpha$, there exists a $(0,1)$-form with values in ${\mathcal L^*}$, $\beta$,  such that $\alpha = \bar \partial_{\cal L^*} \beta$.
\end{prop}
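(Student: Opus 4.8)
The plan is to prove $H^{0,2}(S,\mathcal{L}^*)=0$ by combining Serre duality with the given hypotheses, exploiting the fact that $S$ is a class VII surface (so $b_1(S)=1$ and $\dim H^0(S,\mathcal{O})=1$, with a one-dimensional $H^{0,1}$).

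First I would apply Serre duality on the compact complex surface $S$ to the line bundle $\mathcal{L}^*$, which gives the isomorphism
\[
H^{0,2}(S,\mathcal{L}^*)\cong H^{2,0}(S,\mathcal{L})^*=H^0(S,\mathcal{K}_S\otimes\mathcal{L})^*.
\]
So it suffices to show $H^0(S,\mathcal{K}_S\otimes\mathcal{L})=0$, i.e.\ that there are no holomorphic sections of $\mathcal{K}_S\otimes\mathcal{L}$. The strategy is then to derive a contradiction from the simultaneous existence of a nonzero section of $\mathcal{K}_S^*\otimes\mathcal{L}$ (given by hypothesis) and a hypothetical nonzero section of $\mathcal{K}_S\otimes\mathcal{L}$.

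The key step is to multiply these two putative sections. If $\sigma\in H^0(S,\mathcal{K}_S^*\otimes\mathcal{L})$ is the given nonzero section and $\tau\in H^0(S,\mathcal{K}_S\otimes\mathcal{L})$ is a hypothetical nonzero section, then their tensor product $\sigma\otimes\tau$ lies in $H^0(S,\mathcal{L}^{\otimes 2})$, since the $\mathcal{K}_S^*$ and $\mathcal{K}_S$ factors cancel. A product of two nonzero holomorphic sections on a connected complex manifold is nonzero (holomorphic sections don't vanish on open sets unless identically zero, so their product vanishes only on the union of two proper analytic subsets). This produces a nonzero element of $H^0(S,\mathcal{L}^{\otimes 2})=H^0(S,\mathcal{L}^{\otimes 2})$, contradicting the hypothesis $H^0(S,\mathcal{L}^{\otimes 2})=0$. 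Hence $H^0(S,\mathcal{K}_S\otimes\mathcal{L})=0$, and by Serre duality $H^{0,2}(S,\mathcal{L}^*)=0$, establishing the first claim.

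The main obstacle I anticipate is not the vanishing itself but the final ``in particular'' assertion, which requires the surjectivity of $\bar\partial_{\mathcal{L}^*}\colon\Omega^{0,1}(S,\mathcal{L}^*)\to\Omega^{0,2}(S,\mathcal{L}^*)$. This follows from Hodge theory for the twisted Dolbeault complex: on a compact complex surface, $\Omega^{0,2}(S,\mathcal{L}^*)$ decomposes as the orthogonal sum of $\operatorname{im}\bar\partial_{\mathcal{L}^*}$ and the space of $\bar\partial_{\mathcal{L}^*}$-harmonic $(0,2)$-forms (there is no $\bar\partial_{\mathcal{L}^*}^*$-image component, since $(0,3)$-forms vanish in complex dimension $2$). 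The harmonic space is isomorphic to $H^{0,2}(S,\mathcal{L}^*)$, which we have just shown to vanish; hence every $\alpha\in\Omega^{0,2}(S,\mathcal{L}^*)$ lies in $\operatorname{im}\bar\partial_{\mathcal{L}^*}$, giving the desired $\beta$. One should take care that the elliptic Hodge-theoretic machinery applies verbatim to the flat holomorphic bundle $\mathcal{L}^*$ (it does, using any Hermitian metric on $\mathcal{L}^*$ and on $S$), so this step is routine once the cohomological vanishing is in hand.
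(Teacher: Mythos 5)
Your proposal is correct and takes essentially the same approach as the paper: the key step there is likewise that $H^0(S,\mathcal{K}_S^*\otimes\mathcal{L})\neq 0$ and $H^0(S,\mathcal{L}^{\otimes 2})=0$ force $H^0(S,\mathcal{K}_S\otimes\mathcal{L})=0$ (by the same section-multiplication argument you spell out), and then Serre duality gives $H^{0,2}(S,\mathcal{L}^*)=0$. The only immaterial difference is the final surjectivity claim, where the paper simply notes that $\Omega^{0,3}(M,\mathcal{L}^*)=0$ makes every $(0,2)$-form $\bar\partial_{\mathcal{L}^*}$-closed, so the vanishing of Dolbeault cohomology yields exactness directly, whereas you invoke the Hodge decomposition --- both are routine and valid.
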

\begin{proof} As $H^0(S, {\cal L}^{\otimes 2})=0$ and $H^0(S, {\cal K}^*_S\otimes {\cal L})\neq 0$, it follows that $H^0(S, {\cal K}_S\otimes \cal L)=0.$ By Serre duality, $H^2(S, {\cal L}^*)\cong H^{0,2}(S, {\cal L}^*)=0$. As $\Omega^{0,3}(M, {\cal L}^*)=0$, $\alpha= \bar \partial_{{\cal L}^*} \beta.$ \end{proof}

\begin{rem}\label{r:2} {\rm If $S$ is a minimal complex surface in the class VII with $b_2(S)>0$ and $H^0(S, {\cal K}^*_S \otimes \cal L)\neq 0$ for some flat line bundle $\mathcal{L}$, then $H^{0,2}(S, \cal F)=0$ for any non-trivial flat line bundle $\cal F$ by \cite{Dl}, \cite[Lemma 2.1]{DOT3} and Serre duality.}
\end{rem}

\vspace{0.2cm}
Our proof  of Theorem~\ref{main} will rely on the following proposition, which should be regarded as a straightforward generalization of \cite[Theorem~6.2]{Gual} to the case of \emph{locally conformal generalized K\"ahler structures}~\cite{Vaisman}.
\begin{prop}\label{marco}{\rm \cite{Gual}}\label{twisted lemma of Gualtieri} Suppose $S=(M,J)$ is a compact complex surface as in Theorem~\ref{main}, endowed with a holomorphic section  $\s\in H^0(S,{\cal K}_S^*\ot {\mathcal L})$, where ${\mathcal L}$ is a flat holomorphic line bundle of real type corresponding to a de Rham class $[\t]\in H^1_{dR}(M,\bb R)$. Let $Q={\rm Re} (\s )$, and $\o\in \Omega^2(M,L^{*})$ be a $d_{L^*}$-closed 2-form with values in $L^*$ such that
\begin{enumerate}
\item[$\bullet$] the $J$-invariant part of $\omega$ is positive definite;
\item[$\bullet$] $\o J - J^*\o + \o Q \o=0$, where $\o:TM\to T^*M\ot L^{*}$, $Q:T^*M\to TM\ot L$ and $J^*$ acts on $T^*M$ by $J^*\alpha (\cdot) = -\alpha (J \cdot)$.
\end{enumerate}
Then, 
\begin{enumerate}
\item[\rm (i)]   $J_-:=-J -Q\o$ is an integrable complex structure on $M$;
\item[\rm (ii)] $g=-\frac{1}{2}\o(J - J_-)$ is a symmetric tensor field with values in $L^*$ which defines a conformal class $c=[g]$ of Riemannian metrics on $M$;
\item[\rm (iii)]$J_+:=J$ and $J_-$ are orthogonal with respect to $c$ and $J_+(x) \neq J_-(x)$ on $M$.
\end{enumerate}
\end{prop}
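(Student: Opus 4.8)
The plan is to reduce the statement to the untwisted (genuinely generalized K\"ahler) case of \cite[Theorem~6.2]{Gual} by a local conformal change that trivializes the flat bundle $L^{*}$. First I would record the one algebraic fact about $Q$ forced by the twisting: since $\s$ is a section of $\bigwedge^{2}T^{1,0}M$ (twisted by $\mathcal L$), i.e. of type $(2,0)$, it intertwines $J^{*}$ and $J$ as $J\s=-\s J^{*}$, and taking real parts gives
$$JQ+QJ^{*}=0.$$
Because $Q$ takes values in $L$ and $\o$ in $L^{*}$, the composition $Q\o$ is an honest (untwisted) endomorphism of $TM$, so $J_{-}=-J-Q\o$ is globally well defined, and likewise $g=-\tfrac12\o(J-J_{-})$ is a globally defined $L^{*}$-valued tensor. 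Using $JQ+QJ^{*}=0$ together with the quadratic constraint $\o Q\o=J^{*}\o-\o J$, a direct computation gives $J_{-}^{2}=-\mathrm{Id}$, so $J_{-}$ is at least an almost complex structure, and the same constraint shows $g=-\tfrac12(\o J+J^{*}\o)$, which is then seen to be symmetric, $J$-invariant, and (since only the $J$-invariant part of $\o$ contributes) positive definite. These are routine pointwise identities.

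The substantive content is the integrability of $J_{-}$, and this is where I would invoke the local reduction. Taking the cover $\frak U=(U_{i})$ with $\t_{\mid U_{i}}=df_{i}$ of the Preliminaries, set $\hat\o_{i}=e^{-f_{i}}\o$ and $\hat\s_{i}=e^{f_{i}}\s$ on $U_{i}$. The hypothesis $d_{L^{*}}\o=0$ becomes $d\hat\o_{i}=0$, so $\hat\o_{i}$ is a genuine closed $2$-form; the hypothesis $\bar\partial_{\t}\s=0$ becomes $\bar\partial\hat\s_{i}=0$, so $\hat\s_{i}$ is a genuine holomorphic bivector, and it is automatically Poisson because the complex dimension of $S$ is two, forcing the Schouten square $[\hat\s_{i},\hat\s_{i}]\in\bigwedge^{3}T^{1,0}M$ to vanish. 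The quadratic constraint and the positivity of the $J$-invariant part are conformally invariant, so on each $U_{i}$ they are precisely Gualtieri's hypotheses for the triple $(J,\hat\o_{i},\hat\s_{i})$. Applying \cite[Theorem~6.2]{Gual} on $U_{i}$ then yields an integrable complex structure $-J-\hat Q_{i}\hat\o_{i}$, with $\hat Q_{i}=\mathrm{Re}(\hat\s_{i})$, together with a genuine bi-Hermitian metric $g_{i}=-\tfrac12\hat\o_{i}(J-(-J-\hat Q_{i}\hat\o_{i}))$ for which $J$ and the new complex structure are orthogonal.

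Finally I would glue. Since $\hat Q_{i}\hat\o_{i}=Q\o$, the locally produced complex structures all coincide with the global $J_{-}$, so $J_{-}$ is integrable, proving (i). Likewise $g_{i}=e^{-f_{i}}g$, so the $g_{i}$ differ only by the positive transition functions of $L^{*}$; hence they represent a single conformal class $c=[g]$, each $g_{i}$ being a bona fide Riemannian metric, which gives (ii) and the $J_{\pm}$-orthogonality in (iii). Positive definiteness also forces $g_{i}\neq0$ everywhere, whence $J-J_{-}\neq0$ everywhere, i.e. $J_{+}(x)\neq J_{-}(x)$ for all $x$, completing (iii).

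The main obstacle is genuinely the integrability of $J_{-}$, which is exactly the hard input supplied by Gualtieri's theorem; everything else is either the pointwise algebra above or bookkeeping of the conformal trivialization. The one point demanding care is that $\s$ may vanish---on the zero divisor of the anticanonical section---where $Q=0$ and $J_{-}=-J=-J_{+}$. One must check that Gualtieri's construction and the gluing remain valid across this locus, which they do, since the defining formulas are smooth and the constraint and positivity are closed conditions; and it is precisely this vanishing that will later place the resulting structure in class (ii).
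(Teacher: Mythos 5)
Your proposal is correct and follows essentially the same route as the paper: trivialize the flat bundle over a cover where $\t_{\mid U_i}=df_i$, observe that the rescaled data $(\hat\o_i,\hat\s_i)$ satisfy the hypotheses of \cite[Theorem~6.2]{Gual} on each $U_i$, and glue the resulting local bi-Hermitian metrics (which transform by the positive factors $e^{f_j-f_i}$) into a global conformal class, with $Q\o$ and hence $J_-$ already globally defined. The only (minor, equally valid) deviation is in part (iii), where you deduce $J_+(x)\neq J_-(x)$ from the everywhere positive-definiteness of $g=-\frac{1}{2}\o(J-J_-)$, whereas the paper argues by contradiction that $J_+(x)=J_-(x)$ would force $Q\o$ to commute with $J$ at $x$, hence $Q(x)=0$ and $J_-(x)=-J_+(x)$.
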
 
\begin{proof} Let $\frak U=(U_i)$ be an open covering such that $\t_{\mid U_i}=df_i$. This defines a trivialization for $L$ with positive constant transition  functions  $(U_{ij}, e^{f_i-f_j})$. We can then write $\omega=(\omega_i)$ and $\sigma = (\sigma_i) $ with respect to $\frak U$, with $\omega_i$ (resp $\sigma_i$) being closed 2-forms (resp. holomorphic Poisson structures) on each $U_i$ such that $\omega_j = e^{f_j-f_i}\omega_i$ (resp. $\sigma_j = e^{f_i-f_j} \sigma_i$). Putting $Q_i= {\rm Re} (\sigma_i)$,  by \cite[Theorem~6.2]{Gual}  the $J$-invariant part of $\omega_i$ gives rise to a bi-Hermitian metric  $g_i$ on each $U_i$  with  $g_j = e^{f_j-f_i} g_i$. Thus, the conformal structures $([g_i], U_i)$ extend to a global conformal class of Riemannian metrics on $M$; similarly,  $(U_i, Q_i\omega_i)$ is a well-defined tensor field  on $M$, showing that $J_-$ is  an integrable almost complex structure on $M$. Finally, in order to verify (iii), suppose that $J_+(x)=J_-(x)$ for some $x\in M$.  It follows from (i) that the endomorphism $Q \o$ of $T_xM$ commutes with $J$. As the $J$-invariant part of $\omega$ is positive-definite (and therefore non-degenerate) while $Q$ anti-commutes with $J$, one concludes that $Q$ must vanishes at $x$. But then, according to (i),  $J_- (x) = -J(x) = -J_+(x)$, a contradiction. \end{proof}

\vspace{0.2cm}
Conversely, the general theory of bi-Hermitian complex surfaces~\cite{AGG} implies
\begin{prop}\label{BH} Any bi-Hermitian structure $(c,J_+, J_-)$ on a compact $4$-manifold $M$,  such that $J_+(x) \neq J_-(x)$ for each $x\in M$ and $J_+(x) \neq -J_-(x)$ for at least one point $x$,  arises from Proposition~\ref{marco}.
\end{prop}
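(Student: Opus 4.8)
The plan is to invert the construction of Proposition~\ref{marco}: from the given data $(c,J_+,J_-)$ we must exhibit a holomorphic section $\s$ and an $L^*$-valued closed $2$-form $\o$ verifying the two bullet conditions, and then check that the reconstruction (i)--(iii) returns exactly $(c,J_+,J_-)$. For the Poisson part there is no freedom: by \cite[Lemma~3]{AGG} the tensor $\s=[J_+,J_-]^{\sharp}$ is a holomorphic section of ${\cal K}_S^*\ot{\mathcal L}$ for a flat holomorphic line bundle ${\mathcal L}$, and it is non-trivial precisely because $J_+(x)\neq -J_-(x)$ at some point; we put $Q={\rm Re}(\s)$. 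The real work is thus to construct $\o$ and to recognise ${\mathcal L}$ as being of real type, corresponding to a class $[\t]\in H^1_{dR}(M,\bb R)$.

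To build $\o$ I would use that the conformal class is \emph{locally conformal generalized K\"ahler}. By \cite[Lemma~1]{AGG} (see also \cite[Prop.~6]{CG}) there are a real closed $1$-form $\t$ and a cover $\frak U=(U_i)$ with $\t_{\mid U_i}=df_i$ so that each rescaled metric $g_i=e^{-f_i}g$ makes $(g_i,J_+,J_-)$ a genuine generalized K\"ahler structure on $U_i$. Each such structure determines, through \cite[Theorem~6.2]{Gual}, a \emph{closed} $2$-form $\o_i$ and a holomorphic Poisson structure $\s_i$ (with $Q_i={\rm Re}(\s_i)$) satisfying Gualtieri's pointwise relations and reconstructing $(g_i,J_+,J_-)$; concretely $\o_i$ is characterised by $g_i=-\frac{1}{2}\o_i(J_+-J_-)$. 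This characterisation is globally meaningful because, for two $g$-orthogonal complex structures inducing the same orientation on an oriented $4$-manifold, $(J_+-J_-)^2$ is a non-positive multiple of the identity that is invertible exactly where $J_+\neq J_-$ — and by hypothesis $J_+\neq J_-$ everywhere. Since on overlaps the $g_i$ differ by the positive constants $e^{f_i-f_j}$, so do the $\o_i$; hence the family $(\o_i)$ glues into a single $\o\in\Omega^2(M,L^*)$ with $d_{L^*}\o=0$, while the $\s_i$ are the local trivialisations of the global $\s$ of the first step.

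With $(\s,\o)$ so defined, the hypotheses of Proposition~\ref{marco} hold fibrewise: positivity of the $J$-invariant part of $\o$ and the relation $\o J-J^*\o+\o Q\o=0$ are exactly the positivity of $g_i$ and the Gualtieri compatibility of the local generalized K\"ahler data, and both are conformally natural, hence descend to the $L^*$-valued statement. Moreover, since Proposition~\ref{marco} is itself defined by restricting to the $U_i$ and invoking \cite[Theorem~6.2]{Gual}, running its reconstruction on $(\s,\o)$ reproduces on each $U_i$ the structure $(g_i,J_+,J_-)$, and therefore globally the conformal class $c$ together with $J_+=J$ and the given $J_-$; condition (iii) holds because $J_+(x)\neq J_-(x)$ is assumed. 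The degeneracy locus $\{\s=0\}=\{J_+=-J_-\}$, which is non-empty in the class (ii), causes no difficulty: there $Q=0$, so $J_-=-J_+-Q\o=-J_+$ and $g=-\o J_+$ stay well defined.

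The crux is the global step rather than any of the pointwise identities. One must know that the local generalized K\"ahler gauges of \cite[Lemma~1]{AGG} differ \emph{precisely} by the conformal factors $e^{f_i-f_j}$ attached to a single real closed $1$-form $\t$ — equivalently, that the Lee form of the structure is real and ${\mathcal L}$ is of real type — so that the closed forms $\o_i$ assemble into a genuine $d_{L^*}$-closed $L^*$-valued form compatible with the one global holomorphic Poisson structure $\s$ of \cite{AGG}. This real-type/patching assertion is what the structure theory of bi-Hermitian surfaces in \cite{AGG} supplies, and it is precisely what upgrades the local and pointwise computations to the required global object of Proposition~\ref{marco}.
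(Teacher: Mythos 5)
Your proposal is correct in substance and ends up producing exactly the same two objects as the paper, but by a genuinely different route, so a comparison is worthwhile. The Poisson part is identical ($\sigma=[J_+,J_-]^{\sharp}$ via \cite[Lemma~3]{AGG}), and your $2$-form, characterized by $g=-\frac{1}{2}\omega(J_+-J_-)$, coincides with the paper's explicit $\omega=F_+-\frac{1}{1-p}g(\Phi J\cdot,\cdot)$: using $J_+J_-+J_-J_+=-2p\,\mathrm{Id}$ one checks $(J_+-J_-)^2=-2(1-p)\,\mathrm{Id}$, so both expressions equal $\frac{1}{1-p}g((J_+-J_-)\cdot,\cdot)$. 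The real difference is where the analytic content (closedness of $\omega$) comes from. The paper stays global: it quotes the codifferential computation $\delta^g\omega(\cdot)=-\frac{1}{2}\omega((\theta_++\theta_-)^{\sharp},\cdot)$ from the proof of \cite[Prop.~4]{AGG}, which together with self-duality gives $d\omega=\frac{1}{2}(\theta_++\theta_-)\wedge\omega$, i.e. $d_{L^*}$-closedness, with the $J$-invariant part equal to $F_+$ by construction, and then verifies the algebraic identities of Proposition~\ref{marco} directly. You instead pass to local generalized K\"ahler gauges $g_i=e^{-f_i}g$ (a step the paper itself licenses, with the same citations \cite[Lemma~1]{AGG} and \cite[Prop.~6]{CG}), extract closed forms $\omega_i$ and the compatibility from \cite[Theorem~6.2]{Gual} on each patch, and glue. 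This inverts Proposition~\ref{marco} patch by patch, which is conceptually attractive, but note the one point needing care: you invoke \cite[Theorem~6.2]{Gual} in the \emph{converse} direction, namely that every generalized K\"ahler structure with $J_+\neq J_-$ everywhere (hence of symplectic type over a contractible patch) arises from a holomorphic Poisson structure together with a closed taming $2$-form. The paper pointedly does not rely on this: it cites \cite{Gual} only for the forward construction and derives the converse from \cite{AGG}. So your argument is complete provided the cited theorem is indeed stated as a correspondence; otherwise you must supply the missing half yourself, e.g. via Hitchin's observation that $-\frac{1}{2}(J_+-J_-)g^{-1}$ is a real Poisson bivector for any generalized K\"ahler structure, whose inverse $2$-form is therefore closed wherever defined --- which is precisely the computation the paper imports from \cite[Prop.~4]{AGG}. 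What each route buys: the paper's proof is self-contained at the global level and never needs symplectic type or local gauges; yours exhibits the proposition as the locally conformal analogue of Gualtieri's characterization and makes explicit the gluing/real-type mechanism, which your final paragraph correctly identifies as the crux.
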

\begin{proof} With respect to a reference metric $g \in c$, let $F_{\pm}(\cdot, \cdot)= g(J_{\pm}\cdot, \cdot)$ denote the fundamental $2$-forms of the Hermitian structures $(g, J_+)$ and $(g, J_-)$, respectively, and $\theta_{\pm}$ the corresponding Lee forms defined by $dF_{\pm} = \theta_{\pm}\wedge F_{\pm}$.  By \cite[Lemma~1]{AGG}, $\theta_+ + \theta_-$ is closed (as $M$ is compact). Let $J$ denote one of the complex structures, $J_+$ say, and $S=(M,J)$ the corresponding complex surface. By \cite[Lemma~3]{AGG}, the $(1,1)$ tensor
\begin{equation*}
\Phi:= \frac{1}{2}(J_+  J_- - J_- J_+)
\end{equation*}
can be transformed, via the metric $g$,  to a $(2,0)$ tensor $Q$. The later defines a smooth section $\sigma$ of $\bigwedge^2(T^{1,0}S)$ with ${\rm Re}(\sigma)=Q$, which belongs to the kernel of $\bar \partial_{\theta}= \bar \partial + \theta^{0,1} \otimes$ with $\theta=  \frac{1}{2}(\theta_+ + \theta_-)$. Thus, by the discussion at beginning of this section, $\sigma$ can be equally seen as an element of  $H^0(S, {\cal K}^*_S\otimes \cal L)$ where $\cal L= L\otimes {\mathbb C}$ is the flat holomorphic line bundle of real type, corresponding to  the  de Rham class $[\theta]$.

Letting $p= - \frac{1}{4}{\rm trace} (J_+ J_-)$, one has (see e.g.~\cite[Eq.~(2)]{AGG}) that $p$ is a smooth function on $M$ with values in $[-1,1]$. Furthermore, $p(x)= \pm 1$ if and only if $J_+(x) = \pm J_-(x)$. Thus, our assumption is that $p<1$ on $M$, so that the $2$-form
\begin{equation}
\omega(\cdot, \cdot) := F_+ (\cdot, \cdot) - \frac{1}{1-p} g(\Phi  J \cdot, \cdot),
\end{equation}
is well-defined on $M$ and is manifestly self-dual with respect to $g$. The co-differential  of $\omega$ has been computed in the proof of \cite[Proposition~4]{AGG} to be $\delta^g \omega (\cdot ) = -\frac{1}{2}\omega((\theta_+ + \theta_-)^{\sharp}, \cdot), $ where $\sharp$ stands for  the vector field corresponding to a $1$-form via the metric $g$. As $\omega$ is self-dual, the last equality equivalently reads as
\begin{equation*}
d\omega = \frac{1}{2}(\theta_+ + \theta_-) \wedge \omega,
\end{equation*}
showing that $\omega$ is $d_{\theta}$-closed and thus  can be identified with a $d_{L^*}$-closed $2$-form with values in $L^*$  (see the discussion at the beginning of the section). Note that the $J$-invariant part of $\omega$ is, by construction, the positive-definite fundamental $2$-form $F_+$.  The relations (i) and (ii) between $J_+, J_-, g$ and $\omega$ are checked easily. \end{proof}

\section{Proof of Theorem~\ref{main}}

We start with a compact complex surface $S=(M,J)$ in the class VII, endowed with a section $\s \in H^0(S, {\cal K}^*_S\otimes \cal L)$, for a flat holomorphic line bundle $\cal L= L \otimes {\mathbb C}$, where $L$ is a flat real line bundle corresponding to a class $[\theta]\in H^1_{dR}(S)$. Let $F$ be the fundamental form of a lcK metric on $S$ with Lee form $\theta$.  Thus,  $F$ is $d_{\theta}$ closed,  positive definite $J$-invariant $2$-form on $S$, which we will also identify with a $d_{L^*}$-closed positive definite $J$-invariant $2$-form with values in $L^*$ (still denoted by $F$).  Note that the degree of $\cal L$ with respect to a Gauduchon metric $g$ in the conformal class of the lcK structure $F$ is ${\rm deg}_g (\cal L)=- \frac{1}{2\pi}\int_M |\theta|^2 dv_g <0$ (see e.g. \cite[Eq.~(5)]{AD}), so  that $H^0(S, \cal L^{\otimes \ell})=0, \  \forall \ell \ge 1$. It follows that for any $(0,2)$-form with values in $\cal L^*$,  $\alpha = \bar \partial_{\cal L^*} \beta$,  see Proposition~\ref{vanish}. 

We wish to find a family $\omega(t)$ of $d_{L^*}$-closed 2-forms with values in $L^*$, such that $\omega(0) = 0$, $\dot{\omega}(0)= F$ (where the dot stands for the derivative with respect to $t$) and, for sufficiently small $t>0$, $(\omega(t), Q= {\rm Re}(\sigma))$ satisfy the two conditions of Proposition~\ref{marco}.  Note that the boundary condition at $t=0$ for $\omega(t)$ implies that the $J$-invariant part of $\omega(t)$ will be positive definite for $t>0$ sufficiently small, so we have to deal with the second condition relating $\omega(t)$ and $Q$. To this end,  we suppose $\omega(t)$ is expressed as a power series in $t$,
$$\omega(t) = t\omega_1 + t^2\omega_2 + \ldots,$$
where each $\omega_n$ is a $d_{L^*}$-closed real 2-form with values in $L^*$ and $\omega_1 = F$.

The equation
\begin{equation}\label{B condition}
\o(t) J - J^*\o(t)+ \o(t) Q \o(t)=0
\end{equation}
relates to the $(2,0)+(0,2)$ part of $\omega(t)$.  In other words, it may be expressed as
\begin{equation}\label{e1}
2(J^*   \omega(t)^{2,0 + 0,2} ) - \omega(t)  Q   \omega(t) = 0.
\end{equation}
If we decompose this term-by-term, we have (factoring out $t^n$)
\begin{equation}\label{term condition}
J^*   \omega_n^{2,0+0,2} = \frac{1}{2} \sum_{i+j = n} \omega_i  Q  \omega_j.
\end{equation}
Since $\omega_1 = F$ is $(1,1)$, this is satisfied for $n=1$.  Given $\omega_i$ for all $i < n$, \eqref{term condition} fixes $\omega_n^{2,0+0,2}$.

Since we need that $d_{L^*} \big(\omega(t)\big)=0$, in particular  we must have $\bar \del_{\cal L^*} \omega_n^{0,2} = 0$ for the $\omega_n^{0,2}$ thus determined: in  complex dimension $2$  this is automatic.  By Proposition~\ref{vanish}, there exists a $(0,1)$-form with values in ${\cal L^*}$, $\beta_n$,  such that
\begin{equation}\label{beta construction}
\del_{\cal L^*} \omega_n^{0,2} = \del_{\cal L^*}\bar\del_{\cal L^*} \beta_n.
\end{equation}
Letting
\begin{equation}\label{1,1}
\omega_n^{1,1} = \del_{\cal L^*} \beta_n + \bar\del_{\cal L^*}\bar\beta_n, 
\end{equation}
the $L^*$-valued real $2$-form  $\omega_n$ defined by \eqref{term condition} and \eqref{1,1} satisfies (by using \eqref{beta construction})
\begin{equation*}
\begin{split}
d_{L^*} \omega_n &= (\partial_{\mathcal{L}^*} + \bar \partial_{\mathcal{L}^*}) (\omega_n^{2,0} + \omega_n^{0,2}  + \omega_n^{1,1}) \\
                              &=  \partial_{\mathcal{L}^*} \omega_n^{0,2} + \bar \partial_{{\mathcal L}^*}\omega_n^{2,0} + \partial_{\mathcal{L}^*} \omega_n^{1,1} +  \bar \partial_{{\mathcal L}^*} \omega_n^{1,1} \\
                             &= \partial_{\mathcal{L}^*} \bar \partial_{\mathcal{L}^*} \beta_n  +  \bar \partial_{\mathcal{L}^*}  \partial_{\mathcal{L}^*} \bar \beta_n   + \bar \partial_{\mathcal{L}^*}  \partial_{\mathcal{L}^*} \beta_n + \partial_{\mathcal{L}^*} \bar \partial_{\mathcal{L}^*}  \bar \beta_n \\
                              &=0,
\end{split}
\end{equation*}                              
as required.

\bigskip
Thus, in order to show that our choices of $\omega_i$ satisfy equation \eqref{term condition}, it remains to be shown that the $(1,1)$-part of \eqref{term condition} vanishes, i.e.,
\begin{lem}\label{1,1 vanish lemma}
Given the $\omega_n$ as defined above,
\begin{align}
\sum_{i+j = n} \left(\omega_i  Q  \omega_j\right)^{1,1} \label{1,1 vanish} = 0.
\end{align}
\end{lem}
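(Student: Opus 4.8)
The plan is to prove \eqref{1,1 vanish} by induction on $n$, treating it as an algebraic identity forced by the lower-order instances of \eqref{term condition}. Write $A_m:=\omega_m^{2,0+0,2}$ and $B_m:=\omega_m^{1,1}$, regard each of $\omega_m,Q,J$ as the bundle maps $TM\to T^*M\otimes L^*$, $T^*M\to TM\otimes L$, $TM\to TM$ appearing in Proposition~\ref{marco}, and set $\Sigma_n:=\sum_{i+j=n}\omega_iQ\omega_j$, so that $A_1=0$, $B_1=F$ and \eqref{term condition} reads $J^*A_n=\tfrac12\Sigma_n$. The first, and cleanest, step is a type count. Because $\sigma\in\wedge^2T^{1,0}S$, the bivector $Q=\mathrm{Re}(\sigma)$ has no $(1,1)$-component and, as a map $T^*M\to TM$, preserves the splitting $T_{\mathbb C}M=T^{1,0}\oplus T^{0,1}$; a form of type $(2,0)+(0,2)$ preserves this splitting as well, whereas a $(1,1)$-form interchanges the two summands. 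Counting interchanges in $\omega_iQ\omega_j$ shows that its $(1,1)$-part is exactly the cross term, so that \eqref{1,1 vanish} is equivalent to
\[
P_n+R_n=0,\qquad P_n:=\sum_{i+j=n}A_iQB_j,\quad R_n:=\sum_{i+j=n}B_iQA_j,
\]
each summand being purely of type $(1,1)$.

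Next I record the commutation rules that will drive the cancellation. A short computation with the type decomposition gives $B_mJ=J^*B_m$ and $A_mJ=-J^*A_m$; and, as already used in the proof of Proposition~\ref{marco}, $Q$ anticommutes with $J$, i.e.\ $QJ^*=-JQ$. The inductive hypothesis is that \eqref{1,1 vanish}, hence the full \eqref{term condition}, holds for every $m<n$; in particular each $\Sigma_m$ with $m<n$ is of pure type $(2,0)+(0,2)$ and $A_m=-\tfrac12 J^*\Sigma_m$ (using $(J^*)^2=-\mathrm{id}$). Substituting this relation into $P_n$ and $R_n$ and moving the resulting $J^*$ across $Q$ and the $B_m$ by the three rules above rewrites $P_n+R_n=\tfrac12 J^*\bigl(U_n-V_n\bigr)$, where $U_n:=\sum_i B_iQ\Sigma_{n-i}$ and $V_n:=\sum_i\Sigma_{n-i}QB_i$. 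On the other hand, using only the transpose relations $\omega_m^{\mathsf T}=-\omega_m$, $Q^{\mathsf T}=-Q$ (valid since these are $2$-forms and a bivector, so that $\Sigma_m^{\mathsf T}=-\Sigma_m$) one gets $U_n=-V_n^{\mathsf T}$. Hence \eqref{1,1 vanish} is reduced to the antisymmetry of $V_n$, equivalently to the $Q$-commutator identity
\[
\sum_{i=1}^{n-1}\bigl(B_iQ\Sigma_{n-i}-\Sigma_{n-i}QB_i\bigr)=0 .
\]

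To prove this identity I expand each $\Sigma_{n-i}=\sum_{k+l=n-i}\omega_kQ\omega_l$ and split every $\omega=A+B$. The contributions in which all three factors are of type $(1,1)$ cancel immediately: the two sums $\sum_{i+k+l=n}B_iQB_kQB_l$ and $\sum_{i+k+l=n}B_kQB_lQB_i$ agree after cyclically relabelling the summation indices. What remains are the terms containing at least one factor $A_m$; here I again substitute $A_m=-\tfrac12 J^*\Sigma_m$ and push every $J^*$ to the left with $B_mJ=J^*B_m$, $A_mJ=-J^*A_m$ and $QJ^*=-JQ$. A decisive simplification is special to complex dimension two: since $\wedge^2T^{1,0}S$ is a line bundle, $Q$ and any $(2,0)+(0,2)$-form $A$ compose as a scalar, namely $QA$ acts as multiplication by a single function on $T^{1,0}$ and by its conjugate on $T^{0,1}$. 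This collapses the residual $A$-terms to expressions in which the two orderings $B_iQ(\cdots)Q\omega$ and $\omega Q(\cdots)QB_i$ coincide, completing the cancellation. The mechanism is already visible at low orders; for instance at $n=3$ the sum $P_3+R_3=A_2QF+FQA_2$ collapses, after $A_2=-\tfrac12 J^*(FQF)$ together with $QJ^*=-JQ$ and $FJ=J^*F$, to $-\tfrac12 J^*FQFQF+\tfrac12 J^*FQFQF=0$, and the same substitution settles $n=4$.

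The main obstacle is precisely this last step: organizing, for general $n$, the multiply-nested substitutions of $A_m=-\tfrac12 J^*\Sigma_m$ so that the residual $A$-terms cancel in pairs rather than merely reproducing the identity one started from. One must avoid the circular shortcut of re-substituting $\Sigma_m=2J^*A_m$ globally, since that relation is itself equivalent to \eqref{term condition}; the genuine content lies in expanding the $\Sigma_{n-i}$ one further level and exploiting the transpose symmetry above. I expect the cleanest way to close the argument is either (i) to exhibit a fixed-point-free, sign-reversing involution on the words in $F,B_m,A_m,Q,J^*$ produced by the expansion—pairing a term arising from $P_n$ with its partner from $R_n$ via reversal of the word together with the leftmost $J^*$—or (ii) to carry out the computation entirely in the local scalar model of the previous paragraph, where the statement reduces to a finite identity among the scalars attached to the $A_m$ and the Hermitian matrices attached to the $B_m$. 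Either route yields $P_n+R_n=0$ and completes the induction, and hence the proof of Lemma~\ref{1,1 vanish lemma}.
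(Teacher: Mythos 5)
Your reductions up to the displayed ``$Q$-commutator identity'' are all correct: the type count identifying $(\omega_iQ\omega_j)^{1,1}$ with the cross terms $A_iQB_j+B_iQA_j$, the computation $P_n+R_n=\tfrac12 J^*(U_n-V_n)$, and the transpose relation $U_n=-V_n^{\mathsf T}$ all check out, and the target identity $U_n-V_n=0$ is indeed true. The genuine gap is that you never prove it, and the step you defer as ``the main obstacle'' is not routine bookkeeping. Expanding $U_n-V_n$ in $A$'s and $B$'s, the pure-$B$ words and the middle-$A$ words $B_aQA_bQB_c$ cancel by relabelling, and the end-$A$ words $B_aQB_bQA_c-A_aQB_bQB_c$ do cancel in your scalar model (since $A_aQ$ acts on $\Lambda^{1,0}$ by the same scalar $\lambda_a$ as $QA_a$ does on $T^{1,0}$). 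But the remaining two-$A$ words
\begin{equation*}
\sum_{a+b+c=n}\bigl(B_aQA_bQA_c-A_aQA_bQB_c\bigr),
\end{equation*}
which first occur at $n=5$ as $FQA_2QA_2-A_2QA_2QF$ (your test cases $n=3,4$ contain no such words because $A_1=0$), do not collapse by relabelling: restricted to $T^{1,0}$ this sum equals $\sum_{a+b+c=n}(\lambda_b\lambda_c-\bar\lambda_b\bar\lambda_c)\,B_a$, where $QA_m|_{T^{1,0}}=\lambda_m\,\mathrm{id}$. Its vanishing requires an additional nontrivial input, for instance that every $\lambda_m$ is purely imaginary (equivalently, that $QA_m$ is pointwise a real multiple of $J$). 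This happens to be true---e.g.\ a local computation shows $Q(FQF)|_{T^{1,0}}$ is a real multiple of the identity, whence $\lambda_2\in i\mathbb{R}$---but it needs its own induction intertwined with the main one, which you neither formulate nor carry out; neither of your proposed routes (i) or (ii) is actually executed, and (ii) as described is insufficient on its own.

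The paper's proof shows how to avoid this difficulty entirely: do not split $\omega_m$ into $A_m+B_m$. Writing the claim as $\sum_{i+j=n}(J^*\omega_iQ\omega_j+\omega_iQ\omega_jJ)=0$, inserting $0=-\omega_iJQ\omega_j-\omega_iQJ^*\omega_j$ via $JQ=-QJ^*$, and regrouping gives $\sum_{i+j=n}(J^*\omega_i-\omega_iJ)Q\omega_j-\sum_{i+j=n}\omega_iQ(J^*\omega_j-\omega_jJ)$; the inductive hypothesis is then used in the form $J^*\omega_m-\omega_mJ=\sum_{k+l=m}\omega_kQ\omega_l$, valid for the \emph{whole} form $\omega_m$ (its $(1,1)$-part contributes nothing to the left-hand side), which turns both sums into $\sum_{a+b+c=n}\omega_aQ\omega_bQ\omega_c$ and they cancel by relabelling the ordered triples. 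Solving the hypothesis for $A_m=-\tfrac12J^*\Sigma_m$ and substituting only into the $A$-factors, as you do, is precisely what breaks this symmetry and creates the residual words your argument cannot dispose of.
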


\begin{proof}
In other words, we want to show that
$$\sum_{i+j = n} \left(J^* \omega_i  Q  \omega_j + \omega_i  Q  \omega_j  J\right)=0.$$
Since $Q$ anti-commutes with $J$, i.e. $J  Q = -Q  J^*$, this is
\begin{align*}
 &\sum_{i+j = n} \left(J^*  \omega_i  Q   \omega_j -  \omega_i  J  Q   \omega_j - \omega_i  Q   J^*  \omega_j + \omega_i  Q   \omega_j  J \right) \\
=& \sum_{i+j = n} (J^*  \omega_i - \omega_i  J)  Q   \omega_j - \sum_{i+j = n} \omega_i  Q   (J^*  \omega_j - \omega_j  J).
\end{align*}
Now let us assume, inductively, that \eqref{1,1 vanish}---or, equivalently, \eqref{term condition}---holds for all $n' < n$.  Then we make the substitution
$$J^*  \omega_i - \omega_i  J = \sum_{k+l = i} \omega_k  Q  \omega_l$$
(and likewise for $\omega_j$), so that we finally  get
\begin{align*}
\sum_{i+j = n} \left(J^*  \omega_i   Q  \omega_j + \omega_i  Q   \omega_j   J\right)
&= \sum_{j+k+l = n} \omega_k  Q  \omega_l  Q  \omega_j \;-\; \sum_{i+k+l = n} \omega_i  Q   \omega_k  Q   \omega_l =0.
\end{align*}
\end{proof}

In this way we may build a formal power series for a real $d_{L^*}$-closed form $\omega(t)$ with values in $L^*$, which satisfies \eqref{B condition}.  It remains to be shown that this series has a positive radius of convergence.  This is rather standard, by using Hodge theory as in \cite{KM}. Thus, let  $g$ be a Hermitian metric on $S$ (we can take for instance the lcK metric corresponding to $F$)  and $h$ a Hermitian metric on the holomorphic line bundle ${\cal L^*}$ (parallel with respect to the flat connection on $L^*$). Denote by $\bar \square_{\cal L^*} = \bar\del_{\cal L^*}\bar\del_{\cal L^*}^* + \bar\del_{\cal L^*}^*\bar\del_{\cal L^*}$  the resulting  Laplacian acting on smooth sections of $\wedge^{0,2}S\otimes {\cal L^*}$.  As $H^{0,2}(S, \cal L^*)=0$, $\bar \square_{\cal L^*}$ is invertible on $C^\infty(\wedge^{0,2}S\otimes {\cal L}^*)$ with inverse denoted by ${\mathbb G}$. Then,  letting
$$\beta_n = \bar \del^*_{\cal L^*} {\mathbb G}(\omega_n^{0,2}),$$
(so that $\beta_n$ manifestly solves \eqref{beta construction}) we get, for $n>1$,
$$\omega_n^{1,1} = \del_{\cal L^*} \bar \del^*_{\cal L^*} {\mathbb G}(\omega_n^{0,2})  + \textnormal{complex  conjugate},$$
and therefore 
$$\omega_n = \omega_n^{0,2} + \del_{\cal L^*} \bar \del^*_{\cal L^*}  {\mathbb G} (\omega_n^{0,2}) + \textnormal{complex  conjugate},$$
where $\omega_n^{0,2}$ is inductively defined by \eqref{term condition}. Schauder estimates for the Laplacian imply that in $C^{k,\alpha}(M)$ (for given $k\ge 2, 0<\alpha <1$)
\begin{equation*}
\|\omega_n\|_{k,\alpha} \leq C_{k,\alpha} \sum_{i+j=n} \|\omega_i\|_{k,\alpha} \|\omega_j\|_{k,\alpha},
\end{equation*}
for some positive constant $C_{k,\alpha}$. We can conclude that the power series $\omega(t) = \sum_{n=1}^{\infty} \omega_n t^n$ converges for small $t$,  by showing as in \cite[Chapter 4, Thm.~2.1]{KM} that $||\omega_n||_{k,\alpha} \le \frac{1}{16C_{k,\alpha}} S_n$, where $S_n= \frac{b_{k,\alpha}^n}{n^2}$ with $b_{k,\alpha}=16C_{k,\alpha}||F||_{k,\alpha}$ (therefore the series converges in $C^{k,\alpha}(M)$ for $t \in [0, 1/b_{k,\alpha})$).

In order to establish smoothness of $\omega(t)$,  we use elliptic regularity as in \cite{KM}. 
The real $2$-form $\omega(t)$ satisfies $\omega(0)\equiv 0, \dot{\omega}(0)\equiv F$  as well as  the equation  \eqref{e1} and
\begin{align}\label{e2}
\omega(t)^{1,1} = tF + 2 {\rm Re}\{\del_{\cal L^*} \bar \del^*_{\cal L^*} {\mathbb G} \omega(t)^{0,2}\}.
\end{align}
Substituting \eqref{e2} into \eqref{e1} and taking only the $(0,2)$ part for simplicity, we get
\begin{align*}
2i  \omega(t)^{0,2} = \omega(t)^{0,2} Q \omega(t)^{0,2} + \Big(\big(tF + 2{\rm Re}\{\del_{\cal L^*} \bar \del^*_{\cal L^*} {\mathbb G} \omega(t)^{0,2}\}\big) Q \big(tF +2{\rm Re}\{\del_{\cal L^*} \bar \del^*_{\cal L^*} {\mathbb G} \omega(t)^{0,2}\}\big)\Big)^{0,2}.
\end{align*}
Letting $\Theta_t = {\mathbb G} \omega(t)^{0,2}$, thus
\begin{align}\label{e3}
2i \bar\square_{\cal L^*} \Theta_t = E(\Theta_t),
\end{align}
where 
$$E (\Theta_t)= \Big(\big((\bar\square_{\cal L^*}  +2 {\rm Re}\circ \del_{\cal L^*} \bar \del^*_{\cal L^*})  \Theta_t + tF\big) Q \big((\bar\square_{\cal L^*}  +2 {\rm Re}\circ\del_{\cal L^*} \bar \del^*_{\cal L^*})  \Theta_t + tF\big)\Big)^{0,2}$$
is a non-linear second-order differential operator  with smooth coefficients acting on sections of $\wedge^{0,2}S\otimes {\cal L^*}$.  As $\Theta_0\equiv 0$, it follows that for small $t>0$, the non-linear equation \eqref{e3} is elliptic at $\Theta_t$,  so that $\Theta_t$ must be $C^{\infty}$ (see e.g. \cite[p.~467, Thm.~41]{Besse}).

\begin{rem} {\rm  Our method of proof  produces non-trivial generalized K\"ahler metrics  (or equivalently, non-trivial bi-Hermitian structures) in the case when $S=(M,J)$ is a compact complex surface of Kodaira dimension $-\infty$ with $b_1(M)$ even, endowed with a non-trivial section $\sigma$ of the anti-canonical bundle ${\cal K}^*_S$. This is the case when starting with a K\"ahler metric $F$,  the proof of Theorem~\ref{main} gives rise to a family of symplectic forms $\omega(t)$ satisfying the conditions of Proposition~\ref{marco} with  $\cal L = {\mathcal O}$.  Note that in this case one needs the analogue of Proposition~\ref{vanish} (i.e.  $H^{0,2}(S)=0$). This  is insured by using the Hodge isomorphism $H^{2,0}(S) \cong H^{0,2}(S)$ under the K\"ahler assumption on $S$,   and  the Serre duality $H^{2,0}(S)\cong H^0(S, {\cal K}_S)=0$ (as the Kodaira dimension is negative).  By \cite[Theorems~1 and 2]{AGG}, this 
allows to recast \cite[Theorem~6.2]{Goto-BH} entirely  within the framework of bi-Hermitian geometry. A more general approach to the deformation theory of generalized K\"ahler structures of any dimension  has been independently developed  by  M. Gualtieri and N. J. Hitchin~\cite{GH}.}
\end{rem}

\begin{rem} {\rm As observed in \cite{Goto-JDG}, as a by-product of Theorem~\ref{main} one obtains non-obstructness of the class $[Q F] \in H^1(S, T^{1,0}S)$, for any lcK metric $F$ with Lee form corresponding to $\cal L^*$,  should it exist. }
\end{rem}

\section{Towards a converse}
In order to further motivate the conjecture in the introduction, recall that by Proposition~\ref{BH},  any bi-Hermitian structure $(c,J_+, J_-)$  on $S=(M,J)$ with $J=J_+$ and   $J_+(x) \neq J_-(x)$ for each $x\in M$,   gives rise to a $d_{L^*}$-closed form $\omega$ whose $J$-invariant part is positive-definite: in other words,  $J$ is tamed by a locally conformally symplectic $2$-form $\omega$ with Lee form corresponding to $L^*$.  Note that the flat line bundle $L^*$ is the dual of the flat bundle $L$ for which $\sigma \in H^0(S, {\cal K}^*_S \otimes \cal L)$. As computed in \cite{A} (see also \cite[Eq.~(6)]{AD}), another necessary condition for $L^*$ is that 
${\rm deg}_g (\cal L^*)>0$ with respect to the Gauduchon metric of $(c, J)$, in particular $H^0(S, \cal L^\ell)=0$ for $\ell \ge 1$.  On a given minimal complex surface in the Class VII with a global spherical shell and second Betti number $b_2(M)>0$,  one can show that there  is a  finite number of such line bundles $L$.  We therefore ask the following more general
\begin{question} Let $S=(M,J)$ be a minimal compact complex surface in the class VII, with a global spherical shell, and $\cal L$ a flat holomorphic line bundle of real type such that $H^0(S, \cal L^{\ell})=0$ for any $\ell \ge 1$. Suppose there exists a $d_{L^*}$-closed $2$-form with values in $L^*$,  $\omega$,  whose $J$-invariant part is positive definite and denote $\Omega=[\omega]\in H^2(M, L^*)$ the corresponding Novikov cohomology class. Does $\Omega$ contain a  positive-definite $J$-invariant $2$-form $F$?
\end{question}
Note that the vanishing $H^0(S, \cal L^{\ell})=0$ for $\ell \ge 1$ is a necessary condition as the degree of $\cal L$ with respect to a Gauduchon metric $g$ in the conformal class of the lcK structure is ${\rm deg}_g (\cal L)=- \frac{1}{2\pi}\int_M |\theta|^2 dv_g <0$ (see e.g. \cite[Eq.~(5)]{AD}). 

For a compact complex surface with $b_1(S)$ even and $H^0(S, {\cal K}_S) = 0$ (and $\cal L= \cal O$), the analogous statement is known to be true as $H^2_{dR}(S) \cong H^{1,1}(S, {\mathbb R})$ and therefore the de Rham class $\Omega=[\omega]$ of a symplectic form taming $J$ defines a K\"ahler class by a result of Buchdahl~\cite{buchdahl} and Lamari~\cite{lamari}. 

A similar question has been raised in \cite[Remark 1]{B1} and \cite{OV2}. The general existence results in \cite{B1,B2} show that on a minimal Kato surface lcK metrics exist for  $L^*$ corresponding to an interval of {\it big}  Lee forms in  $H^1_{dR}(S)$ (i.e. for Lee forms with de Rham classes $t a$, $0\neq a \in H^1_{dR}(M), \ t>\varepsilon(S)\gg 0$) while the stability results in \cite{B1} or  \cite{G} combined with Remark~\ref{r:2}  imply that  the existence of lcK metrics in $H^2(M, L^*)$  is stable under complex deformations of $S$.

\section{Examples}~\label{Hopf}
In this section we give new examples of bi-Hermitian metrics on primary Hopf surfaces $S$, such that  $J_+(x) \neq J_-(x)$  on $S$ and $J_+= - J_-$ on an elliptic curve $E \subset S$. To the best of our knowledge,  these are new:  Indeed, they are not strongly bi-Hermitian (as $J_+ = - J_-$ on $E$) nor are they ASD  (see \cite{Pontecorvo}) as $J_+ \neq J_-$ everywhere.  According to \cite{CG}, these examples generate  bi-Hermitian structures of the same kind on the blow-ups of $S$ at $E$.

\vspace{0.2cm}

Recall that a {\it diagonal} primary Hopf surface $S$ is defined as the quotient of ${\mathbb C}^2 \setminus \{(0,0)\}$ by a contraction
\begin{equation}\label{contraction}
\g(z_1,z_2)=(a_1z_1,a_2z_2),\quad 0<|a_1| \le |a_2| <1.
\end{equation}
Letting $a=(a_1,a_2)$ we denote by $S_{a}$  the resulting diagonal Hopf surface. As $S_a \cong S^1 \times S^3$, any holomorphic line bundle is topologically trivial. The diagonal Hopf surface $S_a$ admits  two  elliptic curves, $E_1$ and $E_2$, which are respectively the projections of the axes $\{z_1=0\}$ and $\{z_2=0\}$ in ${\mathbb C^2}$ under the contraction \eqref{contraction}. A holomorphic section of the anti-canonical bundle ${\cal K}^*_{S_a}$ is induced by the $\gamma$-invariant bi-vector
$$z_1z_2 \frac{\del}{\del z_1}\wedge \frac{\del}{\del z_2},$$
showing that
$${\cal K}^*_{S_a} = [E_1 + E_2].$$
Without loss of generality, we  can choose the identification \eqref{H1} so that ${\cal K}^*_{S_a}$ corresponds to $a_1a_2 \in {\mathbb C}^*$ and thus $[E_i]$ corresponds to $a_i$. It follows that the flat bundles
$$\cal L_{p_1,p_2} \cong  p_1[E_1] + p_2[E_2], \ p_i \ge 0, $$
which correspond to $a_1^{p_1}a_2^{p_2}$ under \eqref{H1}, all admit holomorphic sections while ${\cal K}^*_{S_a} \otimes {\mathcal L}_{p_1, p_2}$ has a non-trivial sections for $p_i \ge -1$. Furthermore, it is not difficult to show (see e.g. \cite[Lemma 4]{AD}) that $\cal L_{p_1,p_2}$ is of real type if and only if $a_1^{p_1}a_2^{p_2}$ is a real number. Finally, the condition $H^0(S_a, {\cal L}_{p_1,p_2}^{\ell})=0$ implies $p_1\le -1$ or $p_2\le -1$. We thus get two families of flat bundles $\cal L_p = \cal L_{-1, p}, \ p\ge -1$ (resp. ${\tilde {\cal L}}_q= \cal L_{q, -1}, q \ge -1$) possibly satisfying the necessary conditions in Theorem~\ref{main}, subject to the constraint $a_2^p/a_1 \in {\mathbb R}_{>0}$ (resp. $a_1^q/a_2 \in {\mathbb R}_{>0}$). It remains to investigate whether or not there is a lcK metric on $S_a$ in $H^2(S_a, L^*_p)$ or $H^2(S_a, {\tilde L}^*_q)$.  

\vspace{0.2cm}
In the case $p=-1$ (or equivalently $q=-1$),  the existence of lcK structure in $H^2(S_a, {\cal L}^*_p)=H^2(S_a, {\cal K}^*_{S_a})$ (with $a_1a_2 \in {\mathbb R}$) is established in \cite{GO} while the existence of (strongly) bi-Hermitian deformations was observed in \cite{AD}. So we shall assume $p, q \ge 0$.

\vspace{0.2cm}
It is  shown in \cite{GO} that any diagonal Hopf surface $S_a$ admits a Vaisman lcK metric, i.e. a lcK Hermitian metric $g_0$ whose Lee form $\theta_0$ is parallel. As observed in \cite{AD}, the de Rham class $[\theta_0]$ of the Gauduchon--Ornea lcK metrics corresponds (via \eqref{H1}) to the real number $c_0=|a_1||a_2|$. The Vaisman lcK metrics always come in families, called $0$-type deformation in \cite{B}, 
$$g_t = g_0 + \frac{(t-1)}{|\theta_0|^2}(\theta_0\otimes \theta_0 + J\theta_0 \otimes J\theta_0), \ t>0, $$ 
with Lee forms $\theta_t = t \theta_0$, so that the de Rham class of $t\theta_0$ corresponds to $c_0^t$ (see e.g. \cite[Eq.~[7)]{B} or \cite[Eq.~(21) \& (22)]{AM}).  It follows that for each flat bundle of real type, ${\cal L}_{\mu}$,  corresponding (via \eqref{H1}) to a positive real number $\mu > 1$, there exists a lcK metric  with Lee form corresponding to $L_{\mu}^*$. Thus, if for some $p\ge 0$  we have $a_2^p/a_1 >1$ (resp. for some $q\ge 0$ we have $a_1^q/a_2 >1$), we can apply  Theorem~\ref{main} with ${\cal L}=\cal L_p$ (resp. $\cal L ={\tilde{ \cal L}}_q$) in order to construct bi-Hermitian metrics metrics on $S^1 \times S^3$.  As a special case, we can take $a_1=a_2= \lambda \in ]0,1[$ and $\cal L =\cal L_0 \cong {\tilde {\cal L}}_0$ (i.e. $p=0=q$).

\vspace{0.2cm}
Another class of (primary) Hopf surfaces are the non-diagonal ones, when $S=S_{b, \lambda, m}$ is obtained as a quotient of ${\mathbb C}^2 \setminus \{(0,0)\}$ by the contraction
$$\gamma(z_1, z_2)= (b^m z_1 + \lambda z_2^m, bz_2),  \ 0<|b|<1, \lambda \neq 0, m \ge 1.$$ 
The deformation argument in \cite{GO} shows that for any $\mu>1$  such $S$ still admits  lcK metrics with fundamental $2$-form  in $H^2(S, L_{\mu}^*)$,   where $L_{\mu}$ is a flat bundle corresponding (via \eqref{H1}) to $\mu$.
Furthermore,   the axis $z_2=0$ of ${\mathbb C}^2$ defines an elliptic curve $E \subset S$ with ${\cal K}^*_S \cong (m+1) E$ and corresponding complex number $b^{m+1}$. If $b\in ]0,1[$ is real, then Theorem~\ref{main} applies for the flat bundles $\cal L =\cal L_p \cong - p[E], \ 1 \le p \le m+1$,  corresponding to $b^{-p}$. 

\begin{rem} {\rm It is well-known (see e.g. \cite{OV}) that the Vaisman lcK metrics we have used to produce our examples of bi-Hermitian metrics admit potentials, i.e. there exists a smooth (real) section $f$ of $L^*$ such that $F= 2i\del_{\cal L^*} \bar \del_{\cal L^*} f= d_{L^*} d^c_{L^*} f$. This allows to construct  bi-Hermitian metrics via a hamiltonian flow, originally due to Hitchin~\cite{hit},  and re-casted in the case of Hopf surfaces in our previous work~\cite{AD} (in order  to obtain  strongly bi-Hermitian metrics).  Indeed, for any such potential $f$, $X_f= Q(df)$ is a smooth vector field on $M$  whose flow $\varphi_s$ defines a family of $2$-forms
\begin{equation*}
\omega(t) := \int_{0}^t \varphi_s^*(d_{L^*}d_{L^*}^c f) ds,
\end{equation*}
satisfying the second relation in Proposition~\ref{twisted lemma of Gualtieri} and  $\Big(\frac{d \omega(t)}{dt}\Big)_{t=0}  = d_{L^*}d_{L^*}^c f =F$ (see \cite{Gual}). Thus, we obtain a familly of bi-Hermitian structures with  $J_+=J$ and $J_-^t = -\varphi_t (J)$}. \end{rem}

\appendix

\section{A rough classification of bi-Hermitian complex surfaces in the class ${\rm VII}_0$}
In this section,  we recast the list obtained in \cite{Dl} of the minimal compact complex surfaces $S=(M,J)$ in the Kodaira class VII,  possibly admitting compatible  bi-Hermitian structures,    in terms of the classification of bi-Hermitian structures in the three classes (i)--(iii) from the introduction (defined as a function of the number of connected components of the divisor determined by $\sigma=[J_+, J_-]^{\sharp} \in H^0(S, {\cal K}^*_S\otimes \cal L)$). 

As in \cite{Dl},  we will assume that the algebraic dimension of $S$ is zero, i.e. that there are no non-constant meromorphic functions on $S$, and that the fundamental group $\pi_1(S) \cong {\mathbb Z}$. These assumptions exclude only the cases of elliptic primary Hopf surfaces (i.e. diagonal Hopf surfaces  with $a_1^{p_1} = a_2^{p_2}$ for some $p_1,p_2 \in {\mathbb Z}$, see \cite{kodaira}), and the secondary Hopf surfaces (which are finitely covered by a primary Hopf surface described in Section~\ref{Hopf}). We then have the following

\begin{prop} Let $S=(M,J)$ be a minimal compact complex surface in the Kodaira class VII  of algebraic dimension $0$, endowed with a compatible bi-Hermitian structure $(c, J_+=J, J_-)$.  By replacing $S$ with a finite covering if necessary, assume also the fundamental group of $S$ is ${\mathbb Z}$. Then one of the following must hold.
\begin{enumerate}
\item[\rm(i)] Everywhere on $M$, $J_+ \neq J_-$ and $J_+\neq -J_-$. Then $\sigma =[J_+,J_-]^{\sharp} \in H^0(S, {\cal K}^*_S \otimes {\cal K}_S)$ never vanishes, and $S$ is a primary Hopf surface described in \cite[Thm.~1]{AD}.
\item[\rm (ii)] Everywhere on $M$, $J_+ \neq J_-$ but for at least one $x \in M$, $J_+(x) = -J_-(x)$. Then $\sigma =[J_+,J_-]^{\sharp} \in H^0(S, {\cal K}^*_S \otimes \cal L)$ with $\cal L \neq \cal O$, and $S$ must be either a primary Hopf surface, a parabolic Inoue surface, or a surface with GSS of intermediate type.
\item[\rm (iii)] There are points on $M$ where $J_+ = J_-$ and also points where $J_+ = - J_-$. Then $\sigma =[J_+,J_-]^{\sharp} \in H^0(S, {\cal K}^*_S )$ and $S$ must be a primary Hopf surface, a parabolic Inoue surface or an even Inoue-Hirzebruch surface.
\end{enumerate}
\end{prop}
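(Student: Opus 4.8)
My plan is to treat the statement as a \emph{recasting} of Dloussky's enumeration \cite{Dl}, using the commutator section $\sigma$ and the function $p$ to read off the bi-Hermitian class from the geometry of the divisor $D=\mathrm{div}(\sigma)$. The common object is the holomorphic section $\sigma=[J_+,J_-]^\sharp\in H^0(S,\mathcal{K}^*_S\otimes\mathcal{L})$ of \cite[Lemma~3]{AGG}, together with the function $p=-\frac{1}{4}\,\mathrm{trace}(J_+J_-)$ of \cite[Eq.~(2)]{AGG} (see the proof of Proposition~\ref{BH}), which takes values in $[-1,1]$ with $p(x)=\pm1$ exactly when $J_+(x)=\pm J_-(x)$. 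Since $\sigma$ (equivalently $\Phi=\frac{1}{2}(J_+J_--J_-J_+)$) vanishes at $x$ precisely when $J_+$ and $J_-$ commute there, i.e. when $p(x)=\pm1$, the divisor $D$ is supported on the disjoint closed loci $\{p=1\}$ and $\{p=-1\}$. As these are disjoint and closed, a connectedness argument shows that \emph{every connected component of $D$ lies entirely in one of them}, so I would write $D=D^++D^-$, with $D^\pm$ collecting the components along which $J_+=\pm J_-$. The three classes are then, by definition, $D=0$ (case (i)), $D=D^-\neq0$ with $D^+=0$ (case (ii)), and $D^+\neq0\neq D^-$ (case (iii)); moreover $\mathcal{O}(D)\cong\mathcal{K}^*_S\otimes\mathcal{L}$, so the twisting bundle is $\mathcal{L}\cong\mathcal{K}_S\otimes\mathcal{O}(D)$ throughout.

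In cases (i) and (ii) one has $J_+\neq J_-$ everywhere, so Proposition~\ref{BH} applies and the degree computation of \cite[p.~561]{A} (see also \cite[Eq.~(6)]{AD}) gives $\deg_g(\mathcal{L}^*)>0$ for a Gauduchon metric $g$; in particular $\mathcal{L}\not\cong\mathcal{O}$. If moreover $D=0$ (case (i)), then $\sigma$ is a nowhere-vanishing section of $\mathcal{K}^*_S\otimes\mathcal{L}$, forcing $\mathcal{L}\cong\mathcal{K}_S$ and $\sigma\in H^0(S,\mathcal{K}^*_S\otimes\mathcal{K}_S)\cong H^0(S,\mathcal{O})$ a nonzero constant. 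Flatness of $\mathcal{L}\cong\mathcal{K}_S$ then yields $c_1(S)=0$ in real cohomology; since $c_1^2=-b_2(S)$ for a class VII surface, this forces $b_2(S)=0$. The minimal class VII surfaces with $b_2(S)=0$ and $\pi_1(S)\cong\mathbb{Z}$ are exactly the primary Hopf surfaces (the hypothesis $\pi_1\cong\mathbb{Z}$ excludes the Inoue surfaces), and \cite[Thm.~1]{AD} supplies the precise description of $S$.

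In case (iii) the existence of points where $J_+=J_-$ places us, by \cite[Prop.~4]{AGG} and \cite[Thm.~1]{A}, in the (twisted) generalized K\"ahler situation $\mathcal{L}\cong\mathcal{O}$, so that $D$ is an effective anticanonical divisor $D\in|\mathcal{K}^*_S|$. To finish both (ii) and (iii) I would invoke the enumeration of \cite{Dl}: under the standing hypotheses the surfaces admitting a nonzero section of $\mathcal{K}^*_S\otimes\mathcal{L}$ for a flat bundle of real type, together with the configuration of the resulting effective divisor, are completely listed --- Hopf surfaces, parabolic Inoue surfaces, Inoue--Hirzebruch surfaces, and intermediate GSS surfaces, whose maximal curves are elliptic curves, cycles of rational curves, and trees in controlled patterns. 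Sorting this list by whether $\mathcal{L}\cong\mathcal{O}$ (case (iii), $D$ anticanonical with both signs of $p$ present) or $\mathcal{L}\not\cong\mathcal{O}$ (case (ii), all components of $D$ of sign $p=-1$), and discarding configurations incompatible with a divisor of the required twisted-anticanonical type, produces the two asserted sublists.

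The substantive point --- and the step I expect to be the main obstacle --- is the sign assignment hidden in $D=D^++D^-$. Connectedness already guarantees that $p$ is constant (equal to $+1$ or $-1$) on each connected component of $D$, but \emph{which} value occurs on a given elliptic curve, cycle of rational curves, or tree is a local question about the bi-Hermitian structure transverse to that component, and it is precisely what separates the two sublists. I expect this to explain why an elliptic curve arises as a $D^-$-component (as in the Hopf examples of Section~\ref{Hopf}), and why even --- but not odd --- Inoue--Hirzebruch surfaces occur in class (iii): an odd Inoue--Hirzebruch surface carries a single connected anticanonical cycle, on which $p$ must be constant, so it cannot realize $D^+\neq0\neq D^-$, whereas the two disjoint cycles of the even type can carry opposite signs of $p$. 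Once the value of $p$ on every connected component is pinned down, the remaining work --- comparing $\mathcal{O}(D)$ with $\mathcal{K}^*_S$ and matching against \cite{Dl} --- is routine bookkeeping.
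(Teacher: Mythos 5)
Your skeleton agrees with the paper at several points: the decomposition of $D$ into connected components on which $p=-\tfrac{1}{4}{\rm trace}(J_+J_-)$ is constantly $+1$ or $-1$, the reduction of case (i) to \cite[Thm.~1]{AD}, the degree argument giving $\mathcal{L}\not\cong\mathcal{O}$ (and $H^0(S,\mathcal{L}^{\ell})=0$) in case (ii), and the exclusion of odd Inoue--Hirzebruch surfaces from case (iii) because their twisted anticanonical cycle is connected. But the two steps you outsource to citations or to ``routine bookkeeping'' are exactly where the paper's proof does its work, and both contain genuine gaps. First, in case (iii) you obtain $\mathcal{L}\cong\mathcal{O}$ from \cite[Prop.~4]{AGG} and \cite[Thm.~1]{A}. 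As these results are used in the paper (see its introduction), they run in the opposite direction: a twisted generalized K\"ahler structure with $[H]\neq 0$ --- so with $\theta_++\theta_-$ exact, i.e.\ $\mathcal{L}\cong\mathcal{O}$ by hypothesis --- produces a class (iii) structure on a class VII surface. You need the converse, and the degree argument cannot supply it here because the form $\omega$ of Proposition~\ref{BH} is only defined where $J_+\neq J_-$. The converse is true, but the paper only attributes it to \cite[Prop.~3]{AG} in a closing remark and never uses it; instead the proof \emph{derives} $\mathcal{L}\cong\mathcal{O}$ case by case: assuming $\mathcal{L}\neq\mathcal{O}$ (hence $H^0(S,\mathcal{L}^{\ell})=0$ by the degree computation of \cite{A}), on a Hopf surface of algebraic dimension $0$ the divisor $D$ would be supported on a single elliptic curve; on a parabolic Inoue surface the cycle $A$ represents a flat bundle, so \cite[Lemma~2.26]{Dl} forces $D=B$, which is connected; on an even Inoue--Hirzebruch surface neither cycle represents a flat bundle since $A^2<0$ and $B^2<0$. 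Each contradicts the two-component hypothesis or $\mathcal{L}\neq\mathcal{O}$. So either reproduce this argument or cite \cite[Prop.~3]{AG}; citing theorems for their converses is a gap.

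Second, your proposed separating mechanism --- pinning down which sign of $p$ occurs on each component of $D$ --- cannot produce the two sublists: primary Hopf and parabolic Inoue surfaces occur in \emph{both} lists (ii) and (iii), so the sign assignment is not what distinguishes them. The substantive content of case (ii) is that Inoue--Hirzebruch surfaces (even \emph{and} odd) carry no class (ii) structure at all, and this is a statement about non-existence of the required flat bundle, not about divisor configurations: for even Inoue--Hirzebruch surfaces the only flat $\mathcal{L}$ with $H^0(S,\mathcal{K}^*_S\otimes\mathcal{L})\neq 0$ is the trivial one (\cite[Prop.~2.14]{Dl84}), contradicting $\mathcal{L}\neq\mathcal{O}$; for odd ones the unique such $\mathcal{L}$ satisfies $\mathcal{L}^2\cong\mathcal{O}$ (\cite[Prop.~2.14]{Dl84} and \cite[Lemma~2.5]{Nakamura}), contradicting the necessary condition $H^0(S,\mathcal{L}^{\ell})=0$ for all $\ell\ge 1$. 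Without these inputs from \cite{Dl84} and \cite{Nakamura}, the ``sorting'' step is not bookkeeping --- it is the theorem.
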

\begin{proof}
The case (i) is treated in \cite{AD}. 

We will next establish (iii). As the zero set of the holomorphic section $\sigma = [J_+, J_-]^{\sharp} \in H^0(S, {\cal K}^*_S \otimes {\mathcal L})$ consists of the points where either $J_+(x)=J_-(x)$ or $J_+(x)=-J_-(x)$ (see e.g. \cite{AGG}), we conclude that  ${\cal K}^*_S \otimes {\mathcal L}$ is represented by an effective divisor $D$ with at least two connected components. By \cite[Thm.~0.4]{Dl}, $S$ must be then either a primary Hopf surface, a (parabolic) Inoue surface, or an Inoue--Hirzebruch surface.  The Inoue--Hirzebruch surfaces come in two families, called {\it even} or {\it odd} in \cite{Dl}, and it has been already observed in the proof of \cite[Cor.~3.45]{Dl} that the odd Inoue--Hirzebruch surfaces cannot appear as the effective divisor  representing ${\cal K}^*_{S}\otimes {\mathcal L}$ for some flat bundle $\cal L$ is connected (it is given by one cycle of rational curves). It remains therefore to show that in the three cases for $S$,  we must have ${\cal L} = \cal O$. Suppose for contradiction that $\cal L \neq \cal O$: by the degree computation in \cite{A}, we must then have $H^0(S, {\cal L}^{\ell})=0$ for $\ell \ge 1$.

If $S$ is a diagonal Hopf surface as in Section~\ref{Hopf} with $[D] \cong {\cal K}^*_S \otimes {\cal L}$ and $H^0(S, {\cal L}^{\ell})=0$ for $\ell \ge 1$, we have already noticed that $[D] \cong (p+1) [E_2],  p \ge 0$ or $[D] \cong (q+1)[E_1], q \ge 0$. As we assume that $S$  doesn't have non-trivial meromorphic functions, it follows $D= (p+1)E_2$ or $D= (q+1)E_1$, a contradiction as $D$ has at least two connected components. Similarly,  if $S$ is a non-diagonal primary Hopf surface, then ${\cal K}^*_S \cong (m+1)[E]$ and therefore $D = (p+1)E, p\ge 0$, a contradiction.

If $S$ is a parabolic Inoue surface or an even Inoue-Hirzebruch surface, then ${\cal K}^*_{S} \cong [A + B]$ for a cycle $A$ of rational curves and a smooth elliptic curve $B$,  or for two cycles of rational curves $A,B$, respectively (see \cite{Nakamura} and \cite[Prop.~2.27]{Dl}). In the first case,  the cycle $A$ represents a flat bundle, and  therefore, by \cite[Lemma 2.26]{Dl},  we must have $D= B$, a contradiction as $B$ is connected; in the second case, neither $A$ nor $B$ represents a flat bundle because $A^2<0$ and $B^2<0$ (see  \cite[Thm.~6.1]{Nakamura} or \cite[Cor.~2.28]{Dl84}) so we obtain a contradiction as we assumed ${\mathcal L} \neq \cal O$.

Finally, we consider the case (ii): the only additional point with respect to \cite[Thm.~0.4]{Dl} is that  Inoue--Hirzebruch surfaces cannot support bi-Hermitian structures of the class (ii). Indeed,  as we have already explained in the introduction, a necessary condition for the existence of such bi-Hermitian structures  is that $H^0(S, {\cal K}^*_S \otimes {\cal L}) \neq 0$ for a non-trivial flat bundle with $H^0(S, {\cal L}^{\ell})=0$ for $\ell \ge 1$. For the even Inoue--Hirzebruch surfaces the only flat bundle ${\mathcal L}$  with $H^0(S, {\cal K}^*_S \otimes \cal L) \neq 0$ is the trivial one, see \cite[Prop.~2.14]{Dl84}, while for the odd  Inoue--Hirzebruch surfaces,  $H^0(S, {\cal K}^*_S \otimes \cal L) \neq 0$  for a (unique) flat bundle ${\mathcal L}$ which satisfies ${\mathcal L}^2 \cong \cal O$  by \cite[Prop.~2.14]{Dl84} and \cite[Lemma~2.5]{Nakamura}. \end{proof}

\begin{rem} {\rm The general existence problem for bi-Hermitian structures can be reduced to the minimal case by \cite[Lemma~3.43]{Dl} and the construction in \cite{CG}. From the list above, the existence is now fully established in the case (i) by \cite{AD} and in the case (iii) by \cite{FP}.  The construction in this paper provides the first existence results in the case (ii), but a complete resolution is still to come. We also note (see \cite[Prop.~3]{AG}) that the bi-Hermitian minimal complex surfaces in the class (iii) are precisely the ones arising from  twisted generalized K\"ahler structures. }
\end{rem}

\end{document}